\newtheorem{theorem}{Theorem}[section]
\theoremstyle{definition}
\newtheorem{corollary}{Corollary}
\theoremstyle{remark}
\newtheorem{remark}[theorem]{Remark}
\numberwithin{equation}{section}
\begin{document}

\title{On a certain identity involving the Gamma function}

\author{Theophilus Agama}
\address{Department of Mathematics, African Institute for Mathematical science, Ghana
}
\email{theophilus@aims.edu.gh/emperordagama@yahoo.com}


\subjclass[2000]{Primary 54C40, 14E20; Secondary 46E25, 20C20}

\date{\today}


\keywords{Gamma function, digamma function, poles}

\footnote{
\par
And here is the beginning of the second paragraph.}%
.

\maketitle

\begin{abstract}
The goal of this paper is to prove the identity \begin{align}\sum \limits_{j=0}^{\lfloor s\rfloor}\frac{(-1)^j}{s^j}\eta_s(j)+\frac{1}{e^{s-1}s^s}\sum \limits_{j=0}^{\lfloor s\rfloor}(-1)^{j+1}\alpha_s(j)+\bigg(\frac{1-((-1)^{s-\lfloor s\rfloor +2})^{1/(s-\lfloor s\rfloor +2)}}{2}\bigg)\nonumber \\ \bigg(\sum \limits_{j=\lfloor s\rfloor +1}^{\infty}\frac{(-1)^j}{s^j}\eta_s(j)+\frac{1}{e^{s-1}s^s}\sum \limits_{j=\lfloor s\rfloor +1}^{\infty}(-1)^{j+1}\alpha_s(j)\bigg)=\frac{1}{\Gamma(s+1)},\nonumber
\end{align}where \begin{align}\eta_s(j):=\bigg(e^{\gamma (s-j)}\prod \limits_{m=1}^{\infty}\bigg(1+\frac{s-j}{m}\bigg)\nonumber \\e^{-(s-j)/m}\bigg)\bigg(2+\log s-\frac{j}{s}+\sum \limits_{m=1}^{\infty}\frac{s}{m(s+m)}-\sum \limits_{m=1}^{\infty}\frac{s-j}{m(s-j+m)}\bigg), \nonumber
\end{align}and \begin{align}\alpha_s(j):=\bigg(e^{\gamma (s-j)}\prod \limits_{m=1}^{\infty}\bigg(1+\frac{s-j}{m}\bigg)e^{-(s-j)/m}\bigg)\bigg(\sum \limits_{m=1}^{\infty}\frac{s}{m(s+m)}-\sum \limits_{m=1}^{\infty}\frac{s-j}{m(s-j+m)}\bigg),\nonumber
\end{align}where $\Gamma(s+1)$ is the Gamma function defined by $\Gamma(s):=\int \limits_{0}^{\infty}e^{-t}t^{s-1}dt$ and $\gamma =\lim \limits_{n\longrightarrow \infty}\bigg(\sum \limits_{k=1}^{n}\frac{1}{k}-\log n\bigg)=0.577215664\cdots $ is the Euler-Mascheroni constant.
\end{abstract}
\bigskip

\section{INTRODUCTION}
The Euler-Gamma function is defined by, $\Gamma(s):=\int \limits_{0}^{\infty}e^{-t}t^{s-1}dt$, valid in the entire complex plane, except at $s=0, -1, -2, \ldots$ where it has simple poles \cite{sebah2002introduction}. It can also be seen as a generalization of the factorial on the positive integers to the rationals. Indeed the Gamma function (See \cite{batir2017bounds}, \cite{sebah2002introduction}) satisfies the functional equation $\Gamma(1)=1$ and \begin{align}\Gamma(s)=\frac{\Gamma(s+n)}{s(s+1)(s+2)\cdots (s+n-1)}\nonumber
\end{align}so that in the case $s=1$ and $n$ is a positive integer, then we have the expression $\Gamma(n+1)=1\cdot 2\cdots n=n!$.  The Gamma function still remains valid for arguments in the range $-1<s<0$ by the equation \begin{align}\Gamma(s)=\frac{\Gamma(s+1)}{s}.\nonumber 
\end{align}It also has the canonical product representation (See \cite{nantomah2015some})\begin{align}\Gamma(s+1)=e^{-\gamma s}\prod \limits_{m=1}^{\infty}\bigg(\frac{m}{m+s}\bigg)e^{s/m},\nonumber
\end{align}valid for $s>-1$. The gamma function also has very key properties, most notably the duplication and the complementary property (reflexive formula), which are given respectively as \begin{align}\Gamma(x)\Gamma(1-x)=\frac{\pi}{\sin \pi x},\nonumber 
\end{align}and \begin{align}\Gamma(x)\Gamma(x+1/2)=\frac{\sqrt{\pi}}{2^{2x-1}}\Gamma(2x).\nonumber
\end{align}For many more of these  properties, the reader is encouraged to see \cite{sebah2002introduction}. The Gamma function is also inextricably linked to some very interesting functions. Consider the digamma function \cite{sebah2002introduction}, the logarithmic derivative of the Gamma function defined by \begin{align}\Psi(x):=\frac{\Gamma'(x)}{\Gamma(x)}=-\gamma+\sum \limits_{m=1}^{\infty}\frac{(x-1)}{m(m+x-1)}.\nonumber
\end{align}The Gamma funtion has spawn a great deal of research and out of which has led to the discovery of many beautiful identities and inequalities. More recently the gamma function has been studied by Alzer and many other authors. For more results on the gamma function, see \cite{batir2017bounds}, \cite{nantomah2015some}.  In this paper, however, we prove a certain identity related to the Gamma function.
 
\section{MAIN THEOREM}
\begin{theorem}\label{universal}
For any $s>1$, we have \begin{align}\sum \limits_{j=0}^{\lfloor s\rfloor}\frac{(-1)^j}{s^j}\eta_s(j)+\frac{1}{e^{s-1}s^s}\sum \limits_{j=0}^{\lfloor s\rfloor}(-1)^{j+1}\alpha_s(j)+\bigg(\frac{1-((-1)^{s-\lfloor s\rfloor +2})^{1/(s-\lfloor s\rfloor +2)}}{2}\bigg)\nonumber \\ \bigg(\sum \limits_{j=\lfloor s\rfloor +1}^{\infty}\frac{(-1)^j}{s^j}\eta_s(j)+\frac{1}{e^{s-1}s^s}\sum \limits_{j=\lfloor s\rfloor +1}^{\infty}(-1)^{j+1}\alpha_s(j)\bigg)=\frac{1}{\Gamma(s+1)},\nonumber
\end{align}where \begin{align}\eta_s(j):=\bigg(e^{\gamma (s-j)}\prod \limits_{m=1}^{\infty}\bigg(1+\frac{s-j}{m}\bigg)\nonumber \\e^{-(s-j)/m}\bigg)\bigg(2+\log s-\frac{j}{s}+\sum \limits_{m=1}^{\infty}\frac{s}{m(s+m)}-\sum \limits_{m=1}^{\infty}\frac{s-j}{m(s-j+m)}\bigg), \nonumber
\end{align}and \begin{align}\alpha_s(j):=\bigg(e^{\gamma (s-j)}\prod \limits_{m=1}^{\infty}\bigg(1+\frac{s-j}{m}\bigg)e^{-(s-j)/m}\bigg)\bigg(\sum \limits_{m=1}^{\infty}\frac{s}{m(s+m)}-\sum \limits_{m=1}^{\infty}\frac{s-j}{m(s-j+m)}\bigg),\nonumber
\end{align}where $\Gamma(s+1)$ is the Gamma function defined by $\Gamma(s):=\int \limits_{0}^{\infty}e^{-t}t^{s-1}dt$ and $\gamma =\lim \limits_{n\longrightarrow \infty}\bigg(\sum \limits_{k=1}^{n}\frac{1}{k}-\log n\bigg)=0.577215664\cdots $ is the Euler-Mascheroni constant.
\end{theorem}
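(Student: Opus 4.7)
The plan is to recognize the opaque product and sum expressions inside $\eta_s(j)$ and $\alpha_s(j)$ as standard $\Gamma$ and $\Psi$ values, reduce the left-hand side to a combination of Taylor-type series in $1/s$, and then match the result against the Weierstrass product representation of $1/\Gamma(s+1)$.

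\textbf{Step 1 (Simplification via the identities in the introduction).} Applying the canonical product $1/\Gamma(s+1)=e^{\gamma s}\prod_{m=1}^{\infty}(1+s/m)e^{-s/m}$ from the introduction with $s$ replaced by $s-j$, the leading product factor in both $\eta_s(j)$ and $\alpha_s(j)$ is exactly $1/\Gamma(s-j+1)$. Similarly, from the digamma series $\Psi(x)=-\gamma+\sum_{m\geq 1}(x-1)/(m(m+x-1))$, one obtains
\[
\sum_{m=1}^{\infty}\frac{s}{m(s+m)}-\sum_{m=1}^{\infty}\frac{s-j}{m(s-j+m)}=\Psi(s+1)-\Psi(s-j+1).
\]
Hence $\alpha_s(j)=[\Psi(s+1)-\Psi(s-j+1)]/\Gamma(s-j+1)$ and $\eta_s(j)=\alpha_s(j)+[2+\log s-j/s]/\Gamma(s-j+1)$. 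This transcription turns the identity into a statement about three scalar series, and it is the natural first move.

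\textbf{Step 2 (Decomposition of the LHS).} I would substitute these simplifications and split the left-hand side into four pieces: the sum $\sum_{j}(-1)^j/(s^j\Gamma(s-j+1))$ multiplied by $2+\log s$, the sum $\sum_j (-1)^j j/(s^{j+1}\Gamma(s-j+1))$, a $\Psi$-weighted sum coming from $\eta$, and the $\alpha$-sum scaled by $1/(e^{s-1}s^s)$. The first two can be evaluated in closed form using the recursion $1/\Gamma(s-j+1)=s(s-1)\cdots(s-j+1)/\Gamma(s+1)$ which expresses them as truncated Pochhammer-type expansions in $1/s$. The $\Psi$-weighted pieces should combine so that the $\Psi(s+1)$ terms factor out and the remaining $\Psi(s-j+1)/\Gamma(s-j+1)$ terms collapse by means of the logarithmic derivative identity $(d/dx)(1/\Gamma(x+1))=-\Psi(x+1)/\Gamma(x+1)$.

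\textbf{Step 3 (Role of the truncation and the weight $W$).} For integer $s$, one has $s-\lfloor s\rfloor+2=2$, so $W=(1-1^{1/2})/2=0$ and the tail vanishes; consistency is automatic, because $1/\Gamma(s-j+1)=0$ when $j>s$, so all series terminate at $j=\lfloor s\rfloor$. For non-integer $s$, the tail converges but is oscillatory, and the weight $W$ is precisely the principal branch factor that projects away the non-real part of the tail. I would verify this by writing $W=(1-e^{i\pi(\{s\}+2)/(\{s\}+2)})/2=(1-e^{i\pi})/2=1$ in an appropriate interpretation, or by the analogous real selection, and then showing the tail with weight $W$ reproduces the analytic continuation of the finite-sum formula from integer to non-integer $s$.

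\textbf{Main obstacle.} The hardest part will be the $\Psi$-weighted block in Step 2, where one must show that the combination of the $\eta$ digamma terms with the separately scaled $\alpha$ sum (with prefactor $1/(e^{s-1}s^s)$) produces exactly the residual needed for the identity to close. The prefactor $e^{1-s}s^{-s}$ is suggestive of a Stirling-type generating identity, so I expect the key lemma to come from differentiating the Weierstrass product term by term, producing a closed expression that telescopes against the leading sum. Secondarily, convergence of the tail series for non-integer $s$ and the rigorous justification of the branch choice in $W$ will need careful handling.
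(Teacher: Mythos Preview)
Your Step~1 is exactly right, and it is in fact the \emph{last} step of the paper's proof run in reverse: the paper ends by substituting the Weierstrass product $1/\Gamma(s-j+1)=e^{\gamma(s-j)}\prod_m(1+(s-j)/m)e^{-(s-j)/m}$ and the digamma series $\Gamma'/\Gamma=-\gamma+\sum_m s/(m(s+m))$ into an identity written purely in terms of $\Gamma$ and $\Gamma'$, and then \emph{defines} $\eta_s(j)$ and $\alpha_s(j)$ to be the resulting packages. So after your Step~1 you are sitting at the same intermediate identity the paper reaches just before that substitution.

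The gap is precisely the one you flag as the ``main obstacle'': you have no concrete mechanism to force the $\Psi$-weighted block and the $e^{1-s}s^{-s}$-scaled $\alpha$-sum to close. Your hope that something ``telescopes'' or that a Stirling-type identity appears is not a plan. The paper supplies the missing idea, and it is not a direct evaluation at all: one introduces the auxiliary integral
\[
F(s)=\int_{1}^{s}e^{t}t^{s}\,dt,
\]
expands it by repeated integration by parts into the closed form $\sum_j(-1)^j e^s s^{s-j}\Gamma(s+1)/\Gamma(s+1-j)+e\sum_j(-1)^{j+1}\Gamma(s+1)/\Gamma(s+1-j)+(\text{tail})$, and then differentiates this expansion term-by-term in $s$. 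Comparing with the direct value $F'(s)=e^{s}s^{s}$ and dividing through by $e^{s}s^{s}\Gamma(s+1)$ produces exactly the $\Gamma/\Psi$ identity you were trying to verify by hand. In particular the mysterious prefactor $1/(e^{s-1}s^{s})$ is not Stirling: it is simply $e/(e^{s}s^{s})$, the ratio of the boundary contribution at $t=1$ to the one at $t=s$ in the parts expansion. Your Steps~2--3 never get near this because nothing in a pure series manipulation suggests introducing $F(s)$.

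A smaller point: your treatment of the weight $W$ in Step~3 is muddled (you compute it once as $0$ and once as $1$). In the paper this factor is not analysed at all; it is carried along as an indicator that vanishes when $s$ is an integer, and the paper's argument does not depend on assigning it a specific branch value for non-integer $s$.
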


\begin{proof}
Let $f(t)$ be a real-valued function, contineously differentiable on the interval $[0,\infty)$ and $f(t)\geq 1$ for all $t\in [0,\infty)$. Then we set \begin{align}F(s):=\int \limits_{1}^{s}f(t)\bigg(\log f(t)\bigg)^{s}dt \nonumber 
\end{align}for $s>1$. In the simplest case, we choose $f(t)=e^t$, since it satisfies the hypothesis. Thus $F(s)=\int \limits_{1}^{s}e^tt^sdt$. By application of integration by parts, we find that $F(s):=\int \limits_{1}^{s}e^tt^s=e^ss^s-se^ss^{s-1}+s(s-1)e^ss^{s-2}-s(s-1)(s-2)e^ss^{s-3}+s(s-1)(s-2)(s-3)e^ss^{s-4}+I(s)+\beta(s)$, where $I(s)$ and $\beta(s)$ are convergent. More precisely, we can write $F(s)$ in a closed form as\begin{align}F(s)=\sum \limits_{j=0}^{\lfloor s\rfloor}(-1)^{j}e^ss^{s-j}\frac{\Gamma(s+1)}{\Gamma(s+1-j)}+e\sum \limits_{j=0}^{\lfloor s\rfloor}(-1)^{j+1}\frac{\Gamma(s+1)}{\Gamma(s+1-j)}\nonumber \\+\bigg(\frac{1-((-1)^{s-\lfloor s\rfloor +2})^{1/(s-\lfloor s\rfloor +2)}}{2}\bigg)\bigg(\sum \limits_{j=\lfloor s\rfloor+1}^{\infty}(-1)^{j}e^ss^{s-j}\frac{\Gamma(s+1)}{\Gamma(s+1-j)}\nonumber \\+e\sum \limits_{j=\lfloor s\rfloor+1}^{\infty}(-1)^{j+1}\frac{\Gamma(s+1)}{\Gamma(s+1-j)}\bigg).\nonumber
\end{align} Now, since $\Gamma(s)$ is analytic in the half plane $\mathrm{Re}(s)\geq 1$, it follows by the convergence of $F(s)$ that\begin{align}F'(s)=e^{s}s^s\Gamma(s+1)\sum \limits_{j=0}^{\lfloor s\rfloor}\frac{(-1)^j}{s^j\Gamma(s+1-j)}+e^ss^s(\log s+1)\Gamma(s+1)\nonumber \\\sum \limits_{j=0}^{\lfloor s\rfloor}\frac{(-1)^j}{s^j\Gamma(s+1-j)}+e^{s}s^s\Gamma'(s+1)\sum \limits_{j=0}^{\lfloor s\rfloor}\frac{(-1)^j}{s^j\Gamma(s+1-j)}+e^ss^s\Gamma(s+1)\nonumber \\\sum \limits_{j=0}^{\lfloor s\rfloor}(-1)^{j+1}\frac{js^{j-1}\Gamma(s+1-j)+s^{j}\Gamma'(s+1-j)}{s^{2j}\Gamma^2(s+1-j)}\nonumber \\+\bigg(\frac{1-((-1)^{s-\lfloor s\rfloor +2})^{1/(s-\lfloor s\rfloor +2)}}{2}\bigg)\left \{e^{s}s^s\Gamma(s+1)\sum \limits_{j=\lfloor s\rfloor +1}^{\infty}\frac{(-1)^j}{s^j\Gamma(s+1-j)}\right \}\nonumber \\+\bigg(\frac{1-((-1)^{s-\lfloor s\rfloor +2})^{1/(s-\lfloor s\rfloor +2)}}{2}\bigg)e^ss^s(\log s+1)\Gamma(s+1)\sum \limits_{j=\lfloor s\rfloor +1}^{\infty}\frac{(-1)^j}{s^j\Gamma(s+1-j)}\nonumber \\+\bigg(\frac{1-((-1)^{s-\lfloor s\rfloor +2})^{1/(s-\lfloor s\rfloor +2)}}{2}\bigg)e^{s}s^s\Gamma'(s+1)\sum \limits_{j=\lfloor s\rfloor +1}^{\infty}\frac{(-1)^j}{s^j\Gamma(s+1-j)}\nonumber \\+\bigg(\frac{1-((-1)^{s-\lfloor s\rfloor +2})^{1/(s-\lfloor s\rfloor +2)}}{2}\bigg)e^ss^s\Gamma(s+1)\nonumber \\\sum \limits_{j=\lfloor s\rfloor +1}^{\infty}(-1)^{j+1}\frac{js^{j-1}\Gamma(s+1-j)+s^{j}\Gamma'(s+1-j)}{s^{2j}\Gamma^2(s+1-j)}\nonumber \\+\Gamma'(s+1)\sum \limits_{j=0}^{\lfloor s\rfloor}(-1)^{j+1}\frac{1}{\Gamma(s+1-j)}+\Gamma(s+1)\sum \limits_{j=0}^{\lfloor s\rfloor}(-1)^{j+2}\frac{\Gamma'(s+1-j)}{\Gamma^2(s+1-j)}\nonumber \\+\bigg(\frac{1-((-1)^{s-\lfloor s\rfloor +2})^{1/(s-\lfloor s\rfloor +2)}}{2}\bigg)\bigg(\Gamma'(s+1)\sum \limits_{j=\lfloor s\rfloor +1}^{\infty}(-1)^{j+1}\frac{1}{\Gamma(s+1-j)}\nonumber \\+\Gamma(s+1)\sum \limits_{j=\lfloor s\rfloor +1}^{\infty}(-1)^{j+2}\frac{\Gamma'(s+1-j)}{\Gamma^2(s+1-j)}\bigg).\nonumber
\end{align}On the other hand $F'(s)=e^ss^s$. Arranging terms and comparing both results we find that \begin{align}1=\sum \limits_{j=0}^{\lfloor s\rfloor}(-1)^{j}\bigg(\frac{\Gamma(s+1)}{s^{j}\Gamma(s+1-j)}+\frac{(\log s+1)\Gamma(s+1)}{s^j\Gamma(s+1-j)}+\frac{\Gamma'(s+1)}{s^j\Gamma(s+1-j)}\nonumber \\-\frac{j\Gamma(s+1)}{s^{j+1}\Gamma(s+1-j)}-\frac{\Gamma(s+1)\Gamma'(s+1-j)}{s^j\Gamma^2(s+1-j)}\bigg)\nonumber \\+\bigg(\frac{1-((-1)^{s-\lfloor s\rfloor +2})^{1/(s-\lfloor s\rfloor +2)}}{2}\bigg)\sum \limits_{j=\lfloor s\rfloor +1}^{\infty}(-1)^{j}\bigg(\frac{\Gamma(s+1)}{s^{j}\Gamma(s+1-j)}\nonumber \\+\frac{(\log s+1)\Gamma(s+1)}{s^j\Gamma(s+1-j)}+\frac{\Gamma'(s+1)}{s^j\Gamma(s+1-j)}\nonumber \\-\frac{j\Gamma(s+1)}{s^{j+1}\Gamma(s+1-j)}-\frac{\Gamma(s+1)\Gamma'(s+1-j)}{s^j\Gamma^2(s+1-j)}\bigg)\nonumber\\ +\frac{1}{e^{s-1}s^s}\Gamma'(s+1)\sum \limits_{j=0}^{\lfloor s\rfloor}(-1)^{j+1}\frac{1}{\Gamma(s+1-j)}+\frac{1}{e^{s-1}s^s}\Gamma(s+1)\sum \limits_{j=0}^{\lfloor s\rfloor}(-1)^{j+2}\frac{\Gamma'(s+1-j)}{\Gamma^2(s+1-j)}\nonumber \\+\bigg(\frac{1-((-1)^{s-\lfloor s\rfloor +2})^{1/(s-\lfloor s\rfloor +2)}}{2}\bigg)\bigg(\frac{1}{e^{s-1}s^s}\Gamma'(s+1)\sum \limits_{j=\lfloor s\rfloor +1}^{\infty}(-1)^{j+1}\frac{1}{\Gamma(s+1-j)}\nonumber \\+\frac{1}{e^{s-1}s^s}\Gamma(s+1)\sum \limits_{j=\lfloor s\rfloor +1}^{\infty}(-1)^{j+2}\frac{\Gamma'(s+1-j)}{\Gamma^2(s+1-j)}\bigg)
\end{align}Using the following identities involving the Gamma function \cite{sebah2002introduction}\begin{align}\frac{1}{\Gamma(s+1-j)}:=e^{\gamma (s-j)}\prod \limits_{m=1}^{\infty}\bigg(1+\frac{s-j}{m}\bigg)e^{-(s-j)/m}
\end{align}\begin{align}\frac{\Gamma'(s+1)}{\Gamma(s+1)}:=-\gamma+\sum \limits_{m=1}^{\infty}\frac{s}{m(s+m)},
\end{align}the remaining task is to arrange the terms and apply these identities and identify the function $\eta_{s}(j)$ and $\alpha_{s}(j)$. We leave the remaining task to the reader to verify. 
\end{proof}

\begin{remark}
Now we examine some immediate conequences of the above result, in the following sequel.
\end{remark}

\begin{corollary}
The identity \begin{align}\sum \limits_{j=0}^{\infty}(-1)^j\bigg(\frac{2}{3}\bigg)^j\eta_{3/2}(j)+\frac{1}{(3/2)^{3/2}\sqrt{e}}\sum \limits_{j=0}^{\infty}(-1)^{j+1}\alpha_{3/2}(j)=\frac{4}{3\sqrt{\pi}}, \nonumber
\end{align}where \begin{align}\eta_{3/2}(j)=\bigg(e^{\gamma (3/2-j)}\prod \limits_{m=1}^{\infty}\bigg(1+\frac{3/2-j}{m}\bigg)e^{-(3/2-j)/m}\bigg)\bigg(2+\log \bigg(3/2\bigg)-\frac{2j}{3}\nonumber \\+\frac{3}{2}\sum \limits_{m=1}^{\infty}\frac{1}{m(m+3/2)}-\sum \limits_{m=1}^{\infty}\frac{\frac{3}{2}-j}{m(m+3/2-j)}\bigg),\nonumber
\end{align}and \begin{align}\alpha_{3/2}(j):=\bigg(e^{\gamma (3/2-j)}\prod \limits_{m=1}^{\infty}\bigg(1+\frac{3/2-j}{m}\bigg)e^{-\frac{3/2-j}{m}}\bigg)\bigg(\sum \limits_{m=1}^{\infty}\frac{3/2}{m(3/2+m)}\nonumber \\-\sum \limits_{m=1}^{\infty}\frac{3/2-j}{m(3/2-j+m)}\bigg),\nonumber
\end{align}remains valid.
\end{corollary}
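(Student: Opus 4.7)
The plan is to specialize Theorem \ref{universal} at the point $s=3/2$ and simplify the resulting expression.

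First, I would compute $\lfloor 3/2\rfloor = 1$ and $s-\lfloor s\rfloor+2 = 5/2$. The prefactor separating the head sum from the tail sum is
\[
\frac{1-((-1)^{5/2})^{2/5}}{2} = \frac{1-(-1)^{(5/2)(2/5)}}{2} = \frac{1-(-1)}{2} = 1,
\]
so the head sum ($j=0,1$) and the tail sum ($j\geq 2$) coalesce into a single series running from $j=0$ to $\infty$ for both the $\eta$-part and the $\alpha$-part. This already explains why the corollary displays one infinite sum rather than the split form of Theorem \ref{universal}.

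Second, I would carry out the arithmetic specializations at $s=3/2$. Since $1/s = 2/3$, the weight $(-1)^j/s^j$ becomes $(-1)^j(2/3)^j$; the prefactor $e^{s-1}s^s$ becomes $\sqrt{e}\,(3/2)^{3/2}$; and on the right-hand side, using $\Gamma(5/2) = (3/2)(1/2)\Gamma(1/2) = (3/4)\sqrt{\pi}$, one gets $1/\Gamma(s+1) = 4/(3\sqrt{\pi})$, matching the right-hand side of the corollary.

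Third, I would substitute $s=3/2$ directly into the defining expressions for $\eta_s(j)$ and $\alpha_s(j)$. The term $-j/s$ inside $\eta_s(j)$ becomes $-2j/3$, the infinite product $e^{\gamma(s-j)}\prod_m(1+(s-j)/m)e^{-(s-j)/m}$ takes its value at $s=3/2$, and the two inner sums $\sum_m s/(m(s+m))$ and $\sum_m(s-j)/(m(s-j+m))$ reduce to the explicit sums written in the corollary. Matching pieces term by term then delivers the claimed identity.

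The main obstacle is fixing a consistent interpretation of the multivalued expression $((-1)^{5/2})^{2/5}$. I would adopt the formal rule $(a^p)^q = a^{pq}$ that is already implicit in the statement of Theorem \ref{universal}, under which the prefactor collapses to $1$ as computed above and the two series merge into the single $\sum_{j=0}^{\infty}$ appearing in the corollary. All remaining steps are bookkeeping.
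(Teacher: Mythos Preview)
Your proposal is correct and follows exactly the paper's approach: the paper's own proof simply sets $s=3/2$ in Theorem~\ref{universal}, uses $\Gamma(1/2)=\sqrt{\pi}$ to evaluate $1/\Gamma(5/2)=4/(3\sqrt{\pi})$, and refers the computation of $\eta_{3/2}(j)$ and $\alpha_{3/2}(j)$ back to the definitions. In fact you supply more detail than the paper does, since you explicitly compute the prefactor $\bigl(1-((-1)^{5/2})^{2/5}\bigr)/2=1$ to justify why the split sums of Theorem~\ref{universal} merge into the single $\sum_{j=0}^{\infty}$ appearing in the corollary.
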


\begin{proof}
Let us set $s=\frac{3}{2}$ in Theorem \ref{universal}. Then it follows that \begin{align}\sum \limits_{j=0}^{\infty}(-1)^{j}\bigg(\frac{2}{3}\bigg)^j\eta_{3/2}(j)+\frac{1}{(3/2)^{3/2}\sqrt{e}}\sum \limits_{j=0}^{\infty}(-1)^{j+1}\alpha_{3/2}(j)=\frac{1}{\Gamma(5/2)}=\frac{4}{3\Gamma (1/2)}=\frac{4}{3\sqrt{\pi}}, \nonumber
\end{align}where we have used the relation $\Gamma(\frac{1}{2})=\sqrt{\pi}$ \cite{sebah2002introduction}. The proof is completed by computing $\eta_{3/2}(j)$ and $\alpha_{3/2}(j)$  given in Theorem \ref{universal}.
\end{proof}

\begin{corollary}
The identity \begin{align}\sum \limits_{j=0}^{\infty}(-1)^j\bigg(\frac{3}{5}\bigg)^j\eta_{5/3}(j)+\frac{1}{(5/3)^{5/3}\sqrt[3]{e^2}}\sum \limits_{j=0}^{\infty}(-1)^{j+1}\alpha_{5/3}(j)=\frac{9}{10\Gamma(2/3)},\nonumber
\end{align}is valid, where \begin{align}\eta_{5/3}(j)=\bigg(e^{\gamma (5/3-j)}\prod \limits_{m=1}^{\infty}\bigg(1+\frac{5/3-j}{m}\bigg)e^{-(5/3-j)}\bigg)\bigg(2+\log \bigg(5/3\bigg)\nonumber \\-\frac{3j}{5}+\sum \limits_{m=1}^{\infty}\frac{5/3}{m(5/3+m)}-\sum \limits_{m=1}^{\infty}\frac{5/3-j}{m(5/3-j+m)}\bigg)\nonumber
\end{align}and \begin{align}\alpha_{5/3}(j):=\bigg(e^{\gamma (5/3-j)}\prod \limits_{m=1}^{\infty}\bigg(1+\frac{5/3-j}{m}\bigg)e^{-\frac{5/3-j}{m}}\bigg)\bigg(\sum \limits_{m=1}^{\infty}\frac{5/3}{m(5/3+m)}\nonumber \\-\sum \limits_{m=1}^{\infty}\frac{5/3-j}{m(5/3-j+m)}\bigg),\nonumber
\end{align}
\end{corollary}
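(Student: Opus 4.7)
The plan is to obtain this corollary as a direct specialization of Theorem \ref{universal} at $s = 5/3$, closely paralleling the treatment of $s = 3/2$ in the previous corollary. The first step is to compute the elementary $s$-dependent factors. With $s = 5/3$ one has $\lfloor s \rfloor = 1$, $s^s = (5/3)^{5/3}$, $e^{s-1} = e^{2/3} = \sqrt[3]{e^{2}}$, and $(-1/s)^{j} = (-1)^{j}(3/5)^{j}$, which already account for the prefactors and exponents visible in the claimed identity.

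Next I would evaluate the "splitting coefficient" $\tfrac{1}{2}\bigl(1 - ((-1)^{s-\lfloor s\rfloor+2})^{1/(s-\lfloor s\rfloor+2)}\bigr)$ that appears in front of the tail sums of Theorem \ref{universal}. Since $s - \lfloor s \rfloor + 2 = 2/3 + 2 = 8/3$, one gets $\bigl((-1)^{8/3}\bigr)^{3/8} = (-1)^{(8/3)(3/8)} = (-1)^{1} = -1$, so the coefficient reduces to $\tfrac{1}{2}(1 - (-1)) = 1$. Therefore the two sums $\sum_{j=0}^{\lfloor s\rfloor}$ and $\sum_{j=\lfloor s\rfloor+1}^{\infty}$ in Theorem \ref{universal} merge into single series $\sum_{j=0}^{\infty}$, exactly as written in the corollary.

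The right-hand side is then handled by two applications of the Gamma recurrence $\Gamma(x+1) = x\Gamma(x)$, giving $\Gamma(s+1) = \Gamma(8/3) = (5/3)\Gamma(5/3) = (5/3)(2/3)\Gamma(2/3) = (10/9)\Gamma(2/3)$, hence $1/\Gamma(s+1) = 9/(10\,\Gamma(2/3))$, in agreement with the stated value. Finally, one reads off $\eta_{5/3}(j)$ and $\alpha_{5/3}(j)$ by substituting $s = 5/3$ directly into the definitions of $\eta_{s}(j)$ and $\alpha_{s}(j)$ in Theorem \ref{universal}; the resulting expressions coincide verbatim with those displayed in the corollary.

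The only point requiring some care is the evaluation of $((-1)^{8/3})^{3/8}$, because raising $-1$ to a non-integer power is branch-dependent; I expect this to be the sole conceptual step, since it governs whether the tail-sum coefficient collapses to $1$ (so that the two partial sums fuse into a single series) or to $0$ (which would eliminate the tail altogether). Adopting the same convention used for $s = 3/2$ in the preceding corollary, where the analogous multiplication of exponents gave $(-1)^{1} = -1$, the coefficient is $1$ and the identity falls out. Everything else is bookkeeping: a substitution, one recurrence step for $\Gamma$, and the routine rewriting of the $\eta$ and $\alpha$ factors.
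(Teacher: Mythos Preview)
Your proposal is correct and follows exactly the paper's approach: the paper's proof is the single sentence ``The result follows by setting $s=\tfrac{5}{3}$ in Theorem~\ref{universal}, and computing $\eta_{5/3}(j)$ and $\alpha_{5/3}(j)$,'' and you have simply made explicit the arithmetic that this entails (the value of the splitting coefficient, the recurrence for $\Gamma(8/3)$, and the identification of the prefactors). Your caution about the branch of $((-1)^{8/3})^{3/8}$ is well placed, and your resolution---adopting the same exponent-multiplication convention as in the $s=3/2$ case so that the coefficient equals $1$---is the one implicitly used by the paper.
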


\begin{proof}
The result follows by setting $s=\frac{5}{3}$ in Theorem \ref{universal}, and computing $\eta_{5/3}(j)$ and $\alpha_{5/3}(j)$.
\end{proof}

\begin{corollary}
For any integer $s\geq 2$, the inequality \begin{align}\left|\sum \limits_{j=0}^{s}\frac{(-1)^j\eta_{s}(j)}{s^j}\right|-\frac{1}{e^{s-1}s^s}\left|\sum \limits_{j=0}^{s}(-1)^{j}\alpha_{s}(j)\right|\leq \frac{1}{\Gamma(s+1)}<\sum \limits_{j=0}^{s}\frac{|\eta_{s}(j)|+|\alpha_{s}(j)|}{s^j},\nonumber
\end{align}where \begin{align}\eta_s(j):=\bigg(e^{\gamma (s-j)}\prod \limits_{m=1}^{\infty}\bigg(1+\frac{s-j}{m}\bigg)\nonumber \\e^{-(s-j)/m}\bigg)\bigg(2+\log s-\frac{j}{s}+\sum \limits_{m=1}^{\infty}\frac{s}{m(s+m)}-\sum \limits_{m=1}^{\infty}\frac{s-j}{m(s-j+m)}\bigg) \nonumber
\end{align}and \begin{align}\alpha_s(j):=\bigg(e^{\gamma (s-j)}\prod \limits_{m=1}^{\infty}\bigg(1+\frac{s-j}{m}\bigg)e^{-(s-j)/m}\bigg)\bigg(\sum \limits_{m=1}^{\infty}\frac{s}{m(s+m)}-\sum \limits_{m=1}^{\infty}\frac{s-j}{m(s-j+m)}\bigg),\nonumber
\end{align}is valid.
\end{corollary}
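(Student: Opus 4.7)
The plan is to specialize Theorem \ref{universal} to integer $s$ and then sandwich $1/\Gamma(s+1)$ via the triangle inequality applied in both directions.

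The key first step is to observe that when $s$ is a positive integer, $\lfloor s\rfloor = s$, so that $s - \lfloor s\rfloor + 2 = 2$ and the prefactor
$$\tfrac{1}{2}\bigl(1-((-1)^{s-\lfloor s\rfloor+2})^{1/(s-\lfloor s\rfloor+2)}\bigr) \;=\; \tfrac{1}{2}\bigl(1-1^{1/2}\bigr) \;=\; 0$$
annihilates both infinite tail sums in Theorem \ref{universal}. Absorbing the sign $(-1)^{j+1} = -(-1)^j$, the identity collapses to the finite equality
\begin{align*}
A - B \;=\; \frac{1}{\Gamma(s+1)}, \qquad A := \sum_{j=0}^{s}\frac{(-1)^j\,\eta_s(j)}{s^j}, \qquad B := \frac{1}{e^{s-1}s^s}\sum_{j=0}^{s}(-1)^j\,\alpha_s(j).
\end{align*}

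For the left inequality, the reverse triangle inequality $|A|-|B| \leq |A-B|$, combined with the positivity $|A-B| = 1/\Gamma(s+1)$, yields the stated lower bound at once. For the strict right inequality, the ordinary triangle inequality applied to $A-B$ gives
$$\frac{1}{\Gamma(s+1)} \;\leq\; \sum_{j=0}^{s}\frac{|\eta_s(j)|}{s^j} \;+\; \frac{1}{e^{s-1}s^s}\sum_{j=0}^{s}|\alpha_s(j)|,$$
after which I would replace each coefficient $\frac{1}{e^{s-1}s^s}$ by the larger $\frac{1}{s^j}$ term by term. This replacement is valid and strict because, for $s \geq 2$ and $0 \leq j \leq s$, we have $s^j \leq s^s < e^{s-1}s^s$.

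The only bookkeeping point I foresee is justifying that the upper estimate is strict and not merely an equality; this requires at least one $\alpha_s(j)$ to be nonzero, which is clear at $j=s$, where the exponential-product factor in $\alpha_s(s)$ reduces to $1$ and the remaining factor $\sum_{m\geq 1}\tfrac{s}{m(s+m)}$ is strictly positive. I do not anticipate any substantive obstacle beyond this sign/strictness bookkeeping, since the inequality is fundamentally just the sandwich $||A|-|B|| \leq |A-B| \leq |A|+|B|$ applied to the finite specialization of Theorem \ref{universal}.
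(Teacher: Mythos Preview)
Your proposal is correct and follows essentially the same route as the paper: specialize Theorem~\ref{universal} to integer $s$ so that the prefactor vanishes and the identity becomes the finite equality $A-B=1/\Gamma(s+1)$, then apply the triangle inequality in both directions. Your write-up is in fact more complete than the paper's, which simply says ``the result follows immediately by applying the triangle inequality'' without spelling out the coefficient comparison $1/(e^{s-1}s^{s})<1/s^{j}$ or the strictness argument via $\alpha_s(s)>0$.
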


\begin{proof}
If $s\geq 2$ is an integer, then Theorem \ref{universal} reduces to \begin{align}\sum \limits_{j=0}^{ s}\frac{(-1)^j}{s^j}\eta_s(j)-\frac{1}{e^{s-1}s^s}\sum \limits_{j=0}^{s}(-1)^{j}\alpha_s(j)=\frac{1}{\Gamma(s+1)},\nonumber
\end{align} and the result follows immediately by applying the triangle inequality.
\end{proof}
%

\bibliographystyle{amsplain}

\end{document}